\newtheoremstyle{named}{}{}{\itshape}{}{\bfseries}{.}{.5em}{\thmnote{#3}}
\theoremstyle{named}
\title[Closed $G_2$-Structures with Negative Ricci Curvature]{Closed $G_2$-Structures with Negative Ricci Curvature}
\author[Payne]{Alec Payne}
\thanks{North Carolina State University, SAS 4210, 2311 Stinson Drive, Raleigh, NC 27607, USA}
\subjclass[2020]{53C25, 53C29, 53C20}
\date{}
\begin{document}

\begin{abstract}
We study existence problems for closed $G_2$-structures with negative Ricci curvature, and we prove the $G_2$-Goldberg conjecture for noncompact manifolds. We first show that no closed manifold admits a closed $G_2$-structure with negative Ricci curvature. In the noncompact setting, we show that no complete manifold admits a closed $G_2$-structure with Ricci curvature pinched sufficiently close to a negative constant. As a consequence, an Einstein closed $G_2$-structure on a complete manifold must be torsion-free. In addition, when the Einstein metric is incomplete, we find restrictions on lengths of geodesics.

\end{abstract}
\maketitle

\vspace{-.3in}

\section{Introduction}


When searching for Riemannian manifolds with holonomy $G_2$, the first step is usually to find a closed $G_2$-structure, i.e.\ a $G_2$-structure defined by a closed $3$-form. Despite their importance, relatively little is known about closed $G_2$-structures. The only available methods for constructing them on compact manifolds are perturbative techniques or symmetry and dimension reduction~\cite{FinoRafferoResultsonClosed21}. 

There are no known restrictions on the topology of a $7$-manifold $M$ admitting a closed $G_2$-structure, beyond orientability and spinnability of $M$. The most well-known, yet still very weak, geometric restriction on a closed $G_2$-structure is that it has nonpositive scalar curvature. Moreover, it is scalar flat if and only if the holonomy group is contained in $G_2$ (see~\eqref{equation R = -|T|^2}). This suggests that nonpositive curvature restrictions on closed $G_2$-structures are geometrically significant.

It is important to understand the behavior of closed $G_2$-structures under general curvature restrictions. For instance, one may hope to construct new $G_2$-holonomy metrics using the Laplacian flow, a geometric flow of closed $G_2$-structures, but a major obstacle is that there are no known, nontrivial curvature restrictions preserved by this flow.

In this paper, we rule out the existence of closed $G_2$-structures with negative Ricci curvature in some general settings. Our first result says that closed $7$-manifolds with a closed $G_2$-structure may not have negative Ricci curvature.

\begin{thm}\label{theorem compact negative Ric}
    There does not exist a closed $7$-manifold with a closed $G_2$-structure $\vp$ such that $\Ric(g_{\vp})<0$.
\end{thm}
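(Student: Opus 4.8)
The plan is to reduce everything to the torsion form and then manufacture a contradiction from an integral identity. Since $d\vp=0$, the intrinsic torsion of $\vp$ is carried by a single $2$-form $\tau\in\Omega^2_{14}$, determined by $d(\ast\vp)=\tau\wedge\vp$; because $\tau$ lies in the $14$-dimensional summand we have $\tau\wedge\vp=-\ast\tau$, so $d(\ast\vp)=-\ast\tau$ and therefore $d\ast\tau=0$, i.e.\ $\tau$ is coclosed. By the scalar-curvature identity~\eqref{equation R = -|T|^2}, $R_{g_\vp}=-\tfrac12|\tau|^2\le 0$. If $\Ric(g_\vp)<0$ then $R_{g_\vp}<0$ everywhere, so $\tau$ is nowhere vanishing, and by compactness $\Ric(g_\vp)\le-\epsilon\,g_\vp$ for some $\epsilon>0$.

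The key conceptual point is that negativity of the \emph{scalar} curvature is automatic here, so the content of the theorem is that $\Ric$ cannot be negative in \emph{every} direction at once; the argument must exhibit, in an integrated sense, a direction in which $\Ric$ is nonnegative. To locate such a direction I would feed Bryant's explicit formula for the Ricci curvature of a closed $G_2$-structure, which expresses $\Ric$ as a first-order linear expression in $\nabla\tau$ plus a quadratic expression in $\tau$, into an integral. Concretely, I would contract $\Ric$ against a symmetric $2$-tensor built algebraically from $\tau$ (such as $\tau_{ik}\tau_j{}^k$ and its trace-free part), chosen so that the fixed, negative trace part $\tfrac17 R_{g_\vp}\,g_\vp$ is annihilated and only the trace-free behaviour of $\Ric$ survives. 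Integrating over $M$ and moving derivatives off the first-order term by parts, using $d^{*}\tau=0$ and $d\vp=0$, reduces the expression to integrals of $|\nabla\tau|^2$, of cubic terms (linear in $\nabla\tau$), and of quartic terms in $\tau$, with no full Riemann curvature ever appearing.

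The goal is then to show that this integral, which the hypothesis $\Ric(g_\vp)\le-\epsilon\,g_\vp$ forces to be strictly negative, is in fact nonnegative, giving the contradiction that proves no such structure exists. The main obstacle is precisely this final sign determination. It requires two things: carrying out the integration by parts so that the cubic cross-terms either cancel or combine, via coclosedness, into a total derivative; and then invoking the pointwise algebraic identities satisfied by $2$-forms in $\Omega^2_{14}$ to collapse the remaining quartic contractions into a definite multiple of $|\tau|^4$. Since individual quartic contractions of $\tau$ carry no intrinsic sign, the crux is verifying that the specific combination dictated by Bryant's formula, after the scalar part has been projected out, assembles with the correct sign. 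This algebraic bookkeeping, governed entirely by the representation theory of $G_2$ acting on $\Omega^2_{14}$, is where the real work lies.
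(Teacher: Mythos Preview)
Your instinct to contract $\Ric$ against the symmetric tensor $S_{ij}=\tau_{ik}\tau_j{}^{k}$ is exactly the right one, and the paper's proof does precisely this (with $T^2_{ij}=T_i{}^{l}T_{lj}=-\tfrac14 S_{ij}$). However, two of the moves you outline point in the wrong direction and would obscure the actual mechanism.

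First, do \emph{not} pass to the trace-free part of $S$. The entire pointwise sign on the Ricci side comes from the elementary fact that $S$ is positive semidefinite (equivalently, $T^2$ is negative semidefinite, being the square of a skew endomorphism). In an orthonormal frame diagonalising $T^2$ one has $R_{ij}(T^2)^{ij}=\sum_i R_{ii}\,(T^2)_{ii}$, and each summand is a product of a strictly negative number with a nonpositive number, hence $\ge 0$. Projecting out the trace destroys this definiteness and leaves you with no sign at all under the hypothesis $\Ric<0$.

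Second, the integration-by-parts side is far cleaner than you anticipate: there are no surviving $|\nabla\tau|^2$ terms and no residual quartic contractions requiring representation-theoretic sign analysis. The identity one actually needs is
\[
24\,R_{ij}T^{il}T_l{}^{\,j} \;=\; \ast\, d(\tau^{3}),
\]
an exact $7$-form. This follows by combining $i_\vp(\Ric)=-d\tau+\tfrac12\ast(\tau^2)$ with the analogous expression $i_\vp(T^2)=-\tfrac14\ast(\tau^2)+\tfrac12 R\,\vp$ and wedging; the scalar-curvature terms cancel and the remainder collapses to $\tfrac{1}{12}d(\tau^3)$. On a closed manifold the integral therefore vanishes identically.

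Putting the two pieces together: the integrand is pointwise $\ge 0$ and integrates to zero, hence vanishes; since each $R_{ii}<0$ this forces every eigenvalue of $T^2$ to vanish, so $T\equiv 0$, contradicting $R<0$. The ``delicate quartic bookkeeping'' you flag as the crux never arises---the $\Omega^2_{14}$ identity that matters is only $|\tau^2|^2=|\tau|^4$, used inside the derivation of the exact-form identity, not a sign inequality.
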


This result may be surprising because negative Ricci curvature is a very weak geometric condition. Even though every $n$-manifold, $n\geq 3$, admits a complete metric with negative Ricci curvature~\cite{lohkamp1994metrics}, it turns out that closed $G_2$-structures are often incompatible, in a global sense, with negative Ricci curvature metrics.

Our proof technique for Theorem \ref{theorem compact negative Ric} is to formulate an integral identity for closed $G_2$-structures involving the Ricci curvature (see Lemma \ref{lemma integral identity Ric T2}). By noting that a certain term in this identity has a sign, we find a contradiction if the metric has negative Ricci curvature. Our argument for Theorem \ref{theorem compact negative Ric} also rules out $\Ric(g_{\vp})>0$, but this is trivial because $g_{\vp}$ has nonpositive scalar curvature when $\vp$ is closed.

Theorem \ref{theorem compact negative Ric} is sharp since the condition $\Ric(g_{\vp})<0$ may not be relaxed to $\Ric(g_{\vp})\leq 0$. Indeed, there exist closed $7$-manifolds admitting closed $G_2$-structures of nonpositive Ricci curvature. One source of examples comes from extremally Ricci pinched (ERP) $G_2$-structures, which have nonpositive Ricci curvature. Many examples of ERP, closed $G_2$-structures have been found on compact manifolds (see ~\cite{BryantRemarks,Lauret17, BallQuadratic, KathLauret, FinoRafferoEternal}). The techniques of Theorem \ref{theorem compact negative Ric} tell us that a closed $G_2$-structure on a closed manifold with nonpositive Ricci curvature must satisfy a special condition on its torsion: $d(\tau^3)=0$. For comparison, ERP $G_2$-structures on closed manifolds satisfy $\tau^3 = 0$~\cite[(4.53)]{BryantRemarks}. To the author's knowledge, there are no examples of closed manifolds admitting non-ERP closed $G_2$-structures which have nonpositive Ricci curvature and are not Ricci flat. 

Next, we extend Theorem \ref{theorem compact negative Ric} to the case of noncompact manifolds with negatively pinched Ricci curvature, and we resolve the $G_2$-Goldberg conjecture in the noncompact setting. The $G_2$-Goldberg conjecture states that an Einstein closed $G_2$-structure $\vp$ must be torsion-free, i.e.\ $\mathrm{Hol}(g_{\vp})\subseteq G_2$ and, in particular, $g_{\vp}$ is Ricci flat. This conjecture was first considered in various guises in the physics literature (see~\cite{GibbonsPagePope90, CveticGibbonsPope}) and was resolved for closed manifolds in 2003 by Cleyton--Ivanov~\cite{CleytonIvanovGeometryClosed} and Bryant~\cite[(4.40)]{BryantRemarks}. The $G_2$-Goldberg conjecture is the $G_2$-geometry analogue of the Goldberg conjecture in K\"ahler geometry, which states that an almost K\"ahler, Einstein manifold\footnote{This conjecture was originally stated with the extra assumption of compactness.} must be K\"ahler~\cite{goldberg1969integrability}. The analogy between the $G_2$-Goldberg conjecture and the Goldberg conjecture is that closed and torsion-free $G_2$-structures are akin to almost K\"ahler and K\"ahler structures, respectively. The Goldberg conjecture is known to be false for some noncompact almost K\"ahler manifolds but the question remains open in the compact setting~\cite{apostolov2001, apostolov2003}. In sharp contrast to the K\"ahler setting, we find that complete Einstein closed $G_2$-structures do not exist, even on noncompact manifolds, as a consequence of the following theorem.

\begin{thm}\label{theorem Ricci pinched Einstein}
    There does not exist a noncompact $7$-manifold with a closed $G_2$-structure $\vp$ and complete metric $g_{\vp}$ satisfying
    \begin{equation}\label{equation almost Einstein Ric bound}
    -k_2 g_{\vp} \leq \Ric(g_{\vp}) \leq -k_1 g_{\vp},\end{equation}
    for $0 < k_1 \leq k_2$ such that 
    \begin{equation}\label{equation pinching of k1 and k2}\frac{k_1}{k_2} > \left(\frac{7}{18}\right)^{\frac{1}{4}} \approx .79.\end{equation}
\end{thm}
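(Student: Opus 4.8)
The plan is to localize the integral identity of Lemma~\ref{lemma integral identity Ric T2} to geodesic balls and play its pointwise divergence form against Bishop--Gromov volume comparison. Completeness enters only through the validity of the divergence theorem and the volume bound, while the pinching~\eqref{equation pinching of k1 and k2} is precisely the threshold at which the resulting upper and lower volume-growth rates become contradictory. First I would record the a priori torsion bounds: since $g_{\vp}$ has scalar curvature $R = -|T|^2$ by~\eqref{equation R = -|T|^2}, tracing~\eqref{equation almost Einstein Ric bound} gives $7k_1 \le |T|^2 \le 7k_2$, so the torsion is bounded above and below; in particular $|T| \le \sqrt{7k_2}$ everywhere.

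Next I would extract from the proof of Lemma~\ref{lemma integral identity Ric T2} its underlying pointwise identity, which I expect to take the form $\operatorname{div} X = B - \langle \Ric, T\circ T\rangle$, where $T\circ T$ is the positive semidefinite symmetric $2$-tensor $(T\circ T)_{ij}=T_{ik}T_{jl}g^{kl}$ whose trace is $|T|^2$, where $B \ge 0$ is the sign-definite torsion term responsible for Theorem~\ref{theorem compact negative Ric}, and where $X$ is a vector field built algebraically from the torsion. Because $\Ric \le -k_1 g_{\vp}$ makes $\Ric + k_1 g_{\vp}$ negative semidefinite, pairing it with the positive semidefinite $T\circ T$ yields $\langle \Ric, T\circ T\rangle \le -k_1|T|^2$, whence the pointwise lower bound
\begin{equation*}
\operatorname{div} X \;\ge\; k_1|T|^2 \;\ge\; 7k_1^2 \;>\;0 .
\end{equation*}
The complementary ingredient is a sharp pointwise upper bound $|X| \le \tfrac{1}{6\sqrt3}|T|^3 \le \tfrac{1}{6\sqrt3}(7k_2)^{3/2} =: C_0$, obtained from the explicit algebraic form of $X$ together with $|T|\le\sqrt{7k_2}$.

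With $\delta := 7k_1^2$ and $C_0$ in hand, I would run the ball comparison. Writing $V(r)=\operatorname{Vol}(B_r(p))$ for a fixed $p$, the divergence theorem on geodesic balls (valid for a.e.\ $r$, since the cut locus is null) together with the coarea formula give, for a.e.\ $r$,
\begin{equation*}
\delta\,V(r) \;\le\; \int_{B_r(p)} \operatorname{div} X \,dV \;=\; \int_{\partial B_r(p)} \langle X,\nu\rangle \,dA \;\le\; C_0\,\operatorname{Area}(\partial B_r(p)) \;=\; C_0\,V'(r),
\end{equation*}
so $\tfrac{d}{dr}\log V(r) \ge \delta/C_0$ and hence $V(r)$ grows at least like $e^{(\delta/C_0)r}$. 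On the other hand, the lower Ricci bound $\Ric \ge -k_2 g_{\vp}$ and Bishop--Gromov comparison give $V(r) \le C\, e^{\sqrt{6k_2}\,r}$. These two estimates are incompatible as $r\to\infty$ once $\delta/C_0 > \sqrt{6k_2}$, and substituting $\delta = 7k_1^2$ and $C_0 = \tfrac{1}{6\sqrt3}(7k_2)^{3/2}$ shows that this inequality is equivalent to
\begin{equation*}
\left(\frac{k_1}{k_2}\right)^{4} > \frac{7}{18},
\end{equation*}
which is exactly~\eqref{equation pinching of k1 and k2}. This contradiction proves the theorem.

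The main obstacle I anticipate is the sharp algebraic bound $|X| \le \tfrac{1}{6\sqrt3}|T|^3$: the constant $\tfrac{1}{6\sqrt3}=108^{-1/2}$ is what produces the precise threshold $7/18$, so any lossy estimate for $|X|$ degrades the pinching, and the argument must use the exact $G_2$-representation-theoretic form of $X$ coming from Lemma~\ref{lemma integral identity Ric T2}. A secondary point is confirming that $X$ is genuinely bounded: if the divergence identity carries first-derivative-of-torsion terms, these must be controlled, which should follow from the Ricci bounds via the closed-$G_2$ formula expressing $\Ric$ through $T$ and $\nabla T$. One might instead hope to apply the Omori--Yau maximum principle to the bounded function $-R=|T|^2$, which is legitimate since $\Ric$ is bounded below, but $X$ need not be a gradient, so the integrated ball comparison is the natural tool here. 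Finally, the mild regularity needed for the divergence theorem and for $\operatorname{Area}(\partial B_r)=V'(r)$ a.e.\ is standard on complete manifolds.
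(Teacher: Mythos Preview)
Your proposal is correct and follows essentially the same route as the paper: Lemma~\ref{lemma integral identity Ric T2} gives the exact pointwise identity $24\,R_{ij}T^{il}T_{l}{}^{j} = \ast\, d(\tau^3)$ (so your anticipated term $B$ is in fact identically zero, and the ``sign-definite term responsible for Theorem~\ref{theorem compact negative Ric}'' is precisely $R_{ij}T^{il}T_{l}{}^{j}$ itself), and your vector field is explicitly $X^{\flat} = \tfrac{1}{24}\ast(\tau^3)$, purely algebraic in the torsion, so there are no derivative-of-torsion terms to control. The sharp bound $|X|\le \tfrac{1}{6\sqrt{3}}|T|^3$ you flag as the main obstacle is exactly the known $G_2$ inequality $|\tau^3|^2 \le \tfrac{2}{3}|\tau|^6$ of~\eqref{equation tau cubed}; the only substantive difference is that the paper handles the non-smooth boundaries of large geodesic balls via sets of finite perimeter and the mass of the boundary current rather than invoking the divergence theorem directly.
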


Theorem \ref{theorem Ricci pinched Einstein} implies Corollary \ref{corollary complete noncompact Einstein}, which is the $G_2$-Goldberg conjecture without a compactness assumption. This follows since closed $G_2$-structures have nonpositive scalar curvature and are torsion-free exactly when the scalar curvature is identically zero.


\begin{cor}\label{corollary complete noncompact Einstein}
If $(M^7,\vp)$ is a closed $G_2$-structure with a complete Einstein metric $g_{\vp}$, then $(M^7, \vp)$ is torsion-free. In particular, $g_{\vp}$ is Ricci flat.
\end{cor}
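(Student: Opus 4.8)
The plan is to reduce the corollary to the two preceding theorems via the scalar curvature identity for closed $G_2$-structures. Since $\vp$ is closed, the scalar curvature of $g_{\vp}$ satisfies $R = -|T|^2 \leq 0$ by \eqref{equation R = -|T|^2}, with equality precisely when the full torsion tensor vanishes; thus $\vp$ is torsion-free exactly when $R \equiv 0$. The Einstein condition means $\Ric(g_{\vp}) = \lambda\, g_{\vp}$ for a constant $\lambda$, and tracing in dimension $7$ gives $R = 7\lambda$. Combined with $R \leq 0$, this forces $\lambda \leq 0$ and shows that $R$ is a nonpositive constant.

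First I would argue by contradiction, assuming that $\vp$ is not torsion-free. Then $R = 7\lambda$ is a strictly negative constant, so $\lambda < 0$ and $\Ric(g_{\vp}) = \lambda\, g_{\vp} < 0$ everywhere. I would then split into two cases according to whether $M$ is compact or noncompact. If $M$ is compact, the strict inequality $\Ric(g_{\vp}) < 0$ contradicts Theorem \ref{theorem compact negative Ric} directly (this case is also classical, by Cleyton--Ivanov and Bryant).

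If $M$ is noncompact, I would apply Theorem \ref{theorem Ricci pinched Einstein} with $k_1 = k_2 = -\lambda > 0$. The Einstein condition realizes the pinched bound \eqref{equation almost Einstein Ric bound} as an equality, $-k_2\, g_{\vp} \leq \Ric(g_{\vp}) \leq -k_1\, g_{\vp}$, and the pinching ratio is $k_1/k_2 = 1$, which trivially satisfies \eqref{equation pinching of k1 and k2} since $(7/18)^{1/4} \approx .79 < 1$. Completeness of $g_{\vp}$ is the remaining hypothesis of Theorem \ref{theorem Ricci pinched Einstein}, so we again reach a contradiction. Either case rules out $\lambda < 0$, hence $\lambda = 0$, so $R \equiv 0$; therefore $\vp$ is torsion-free and $g_{\vp}$ is Ricci flat.

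There is no serious obstacle here: all the analytic difficulty is already absorbed into Theorems \ref{theorem compact negative Ric} and \ref{theorem Ricci pinched Einstein}, and the corollary is a formal consequence. The only point requiring care is the observation that an Einstein metric saturates the pinching condition \eqref{equation pinching of k1 and k2} with the optimal ratio $1$, so that the negatively pinched nonexistence theorem applies verbatim once $\lambda$ is known to be negative.
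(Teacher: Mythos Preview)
Your proposal is correct and essentially identical to the paper's own proof: both use the identity $R=-|T|_g^2\le 0$ from~\eqref{equation R = -|T|^2} to reduce to the case $\lambda<0$, dispatch the compact case by the known Cleyton--Ivanov/Bryant result (you also note Theorem~\ref{theorem compact negative Ric} applies), and in the noncompact case invoke Theorem~\ref{theorem Ricci pinched Einstein} with $k_1=k_2=-\lambda=|R|/7$ so that the pinching ratio is $1$. There is nothing to add.
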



The $G_2$-Goldberg conjecture for noncompact manifolds has been posed as an open problem in~\cite[Question 3.7]{Lauret17} and~\cite[Remark 3.2]{FinoRafferoResultsonClosed21}. It has previously been solved in the case of solvmanifolds~\cite{FernandezFinoManeroEinsteinSolv12}, in the case of warped products~\cite{ManeroUgarteEinsteinWarped19}, and under the additional assumption of ``$\star$-Einstein''~\cite[Prop.\ 5.7]{CleytonIvanovGeometryClosed} (cf.\ the analogous result for the Goldberg conjecture ~\cite{OguroSekigawa98}). In general, a local version of the conjecture remains open: there are no known examples of non-torsion-free, Einstein closed $G_2$-structures defined on a ball in $\bR^7$~\cite[Remark 12]{BryantRemarks}. 

Any proof of the $G_2$-Goldberg conjecture must go beyond a pointwise analysis of Einstein closed $G_2$-structures because it is possible for a closed $G_2$-structure to have all its Ricci eigenvalues be negative and identical at a single point. Our technique to prove Theorem \ref{theorem Ricci pinched Einstein} is to derive an estimate for the volume growth (Lemma \ref{lemma volume growth Ricci pinched}) which is strictly faster than the volume growth given by the Bishop--Gromov inequality. This volume growth estimate is found by integrating a special identity (Lemma \ref{lemma integral identity Ric T2}) on balls of arbitrary radius. Some technicalities from geometric measure theory are needed to handle the non-smooth boundaries of large balls. Our argument relies on exact constants in certain $G_2$ identities, some of which are not well-known, so we provide a largely self-contained derivation of these in Section \ref{section preliminaries}.

Our argument for Theorem \ref{theorem Ricci pinched Einstein} also gives restrictions on closed $G_2$-structures with incomplete metrics satisfying the pinching conditions~\eqref{equation almost Einstein Ric bound} and~\eqref{equation pinching of k1 and k2}. We find that there are upper bounds on lengths of geodesics under these restrictions. 

\begin{thm}\label{theorem incomplete Einstein}
    Let $(M, \vp)$ be a closed $G_2$-structure with a possibly incomplete metric $g_{\vp}$ satisfying~\eqref{equation almost Einstein Ric bound} and~\eqref{equation pinching of k1 and k2}. Then, there exists $C = C(k_1, k_2) >0$ such that for each $p \in M$, $\exp_p: T_p M \to \bR$ is undefined for some $v \in T_p M$ with $|v|_{g_{\vp}} = C/\sqrt{|k_2|}$. 
    
    In particular, if $g_{\vp}$ is Einstein, then $C \approx 2.05$.
\end{thm}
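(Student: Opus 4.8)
The plan is to prove Theorem~\ref{theorem incomplete Einstein} by running the same machinery used for Theorem~\ref{theorem Ricci pinched Einstein}, but tracking the geometric obstruction quantitatively rather than deriving a global contradiction. The engine is the integral identity of Lemma~\ref{lemma integral identity Ric T2} together with the volume-growth estimate of Lemma~\ref{lemma volume growth Ricci pinched}. Let me sketch the logic.

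First I would fix a point $p \in M$ and suppose, for contradiction, that $\exp_p$ is defined on the entire ball $\{v \in T_p M : |v|_{g_{\vp}} \le R\}$ for some radius $R$ to be determined. Under the pinching hypotheses \eqref{equation almost Einstein Ric bound} and \eqref{equation pinching of k1 and k2}, the Ricci lower bound $\Ric(g_{\vp}) \ge -k_2 g_{\vp}$ feeds the Bishop--Gromov comparison, giving an \emph{upper} bound on the volume of the geodesic ball $B_p(r)$ in terms of the volume of a ball in the space form of curvature $-k_2/6$ (dividing by $n-1=6$). On the other hand, integrating the identity of Lemma~\ref{lemma integral identity Ric T2} over $B_p(r)$ and using the upper Ricci bound $\Ric \le -k_1 g_{\vp}$ together with the sign of the torsion term—exactly as in Lemma~\ref{lemma volume growth Ricci pinched}—yields a \emph{lower} bound on volume growth that is strictly faster than the Bishop--Gromov rate, precisely because the pinching \eqref{equation pinching of k1 and k2} forces the relevant constant to have the right sign. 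The two estimates are compatible for small $r$ but become contradictory once $r$ exceeds a definite threshold $R = C(k_1,k_2)/\sqrt{|k_2|}$, where $C$ is extracted by comparing the exponential growth exponents. This is the same exponent comparison that produced the contradiction in Theorem~\ref{theorem Ricci pinched Einstein}; here I stop the comparison at the finite radius where it first fails. The contradiction means $\exp_p$ could not have been defined on the whole ball of radius $R$, so there is some $v$ with $|v|_{g_{\vp}} = R$ at which $\exp_p$ is undefined, i.e.\ the geodesic from $p$ in direction $v/|v|$ fails to extend to length $R$.

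The technical care required is the same geometric-measure-theoretic handling of the non-smooth cut locus and the boundary $\partial B_p(r)$ that appears in Lemma~\ref{lemma volume growth Ricci pinched}, since the integration-by-parts underlying Lemma~\ref{lemma integral identity Ric T2} must be applied on balls of arbitrary radius with possibly irregular boundary. I would invoke that lemma as a black box and restrict attention to radii below the first cut point along each geodesic, where the exponential map is a diffeomorphism, so that the coarea formula and the volume comparison are valid without further regularity worries.

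The main obstacle I anticipate is pinning down the \emph{exact} constant $C$, and in particular verifying the stated value $C \approx 2.05$ in the Einstein case $k_1 = k_2$. This requires carrying the $G_2$ structure constants (the relation \eqref{equation R = -|T|^2} between scalar curvature and torsion, and the precise coefficients in Lemma~\ref{lemma integral identity Ric T2}) through the exponent comparison without losing sharpness. The value $(7/18)^{1/4}$ in \eqref{equation pinching of k1 and k2} and the Einstein specialization $C \approx 2.05$ both come from these exact constants, so the delicate part is the arithmetic of comparing the Bishop--Gromov growth exponent against the integral-identity growth exponent and solving for the radius at which they cross; I expect this to reduce to a transcendental inequality whose threshold I would solve numerically to obtain $C \approx 2.05$.
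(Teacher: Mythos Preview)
Your proposal is correct and follows essentially the same route as the paper: assume $\exp_p$ is defined on a ball of radius $R$, run the volume lower bound of Lemma~\ref{lemma volume growth Ricci pinched} against the Bishop--Gromov upper bound on that ball, and locate the finite radius at which the resulting monotonicity fails. One small sharpening: the threshold $C$ is not literally obtained by ``comparing exponential growth exponents'' (that comparison only yields a contradiction as $r\to\infty$), but rather by finding where the derivative of the ratio in~\eqref{equation Ricci pinched volume contradiction estimate} changes sign---this is the transcendental equation you anticipate, and in the Einstein case $k_1=k_2$ (after scaling $k_2=1$) its root is $\approx 2.05$. Also, you should not ``restrict attention to radii below the first cut point''; Lemma~\ref{lemma volume growth Ricci pinched} already handles the cut locus via finite-perimeter/GMT arguments, and restricting to the injectivity radius could prevent you from reaching the threshold $C$.
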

    


The assumption that the $G_2$-structure is closed is necessary in all of our results. For example, every nearly parallel $G_2$-structure produces a positive Einstein metric, yet nearly parallel $G_2$-structures are not closed~\cite[Prop.\ 3.10]{FriedrichNearlyParallel}. There even exists a homogeneous $G_2$-structure on a noncompact manifold which is locally conformally equivalent to a closed $G_2$-structure and which produces a non-Ricci flat Einstein metric~\cite{FinoRafferoEinsteinconformal}. 


It is unknown if the pinching restriction~\eqref{equation pinching of k1 and k2} is sharp. One example of a closed $G_2$-structure with negatively pinched Ricci curvature, due to Ball, is defined on $\Lambda^2_{+}\mathbb{C}\mathbb{H}^2$ and produces an incomplete metric satisfying ~\eqref{equation almost Einstein Ric bound} with $k_1/k_2 = 1/3$~\cite[\S 5.1.3]{BallConfomallyflat}. In addition, there exist complete, $\mathrm{Sp}(2)$-invariant closed $G_2$-structures on $\Lambda^2_{-}\bS^4$ which have Ricci curvature negative and bounded away from zero outside a compact set. These examples have volume growth $\exp(r^2)$ and may be found by solving the ODE corresponding to the closed condition for an $\mathrm{Sp(2)}$-invariant $G_2$-structure~\cite[Prop.\ 3.48]{HaskinsNordstrom21}.

\subsection*{Acknowledgments}
The author would like to thank Mark Haskins for helpful discussions and for suggesting some of the problems addressed by this paper. The author would also like to thank Robert Bryant, Shubham Dwivedi, and Gavin Ball for helpful comments and discussions.
 
The author would also like to thank the Simons Foundation for its support under the Simons Collaboration on Special Holonomy in Geometry, Analysis and Physics, grant \#488620.

\section{Preliminaries}\label{section preliminaries}

\subsection{Preliminaries on \texorpdfstring{$G_2$}{G2}-structures}

In this section, we will recall some relevant facts from $G_2$-geometry, and we refer to~\cite{BryantRemarks, JoyceBook00, KarigiannisIntro20, KarigiannisFlows09} for general background. We take care to state our conventions and definitions explicitly, since particular constants in $G_2$ identities are important for our results. We prove a few identities which are not generally well-known in order to verify constants. 

We begin by stating our conventions for coordinate expressions of differential forms. If $M$ is a smooth manifold and $\alpha \in \Om^k(M)$, then in local coordinates,
$$\alpha = \frac{1}{k!} \alpha_{i_1, i_2, \cdots, i_k} dx^{i_1} \we \cdots \we dx^{i_k} = \sum_{i_1 < \cdots < i_k} \alpha_{i_1, \cdots, i_k} \, dx^{i_1} \we \cdots \we dx^{i_k},$$
where $\alpha_{i_1, \cdots, i_k}$ is totally antisymmetric in $i_1, \dots, i_k$.

Given an orientation and Riemannian metric $g$ on $M$, the inner product of two $k$-forms $\alpha$ and $\beta$ is given in local coordinates by
\begin{equation}\label{equation inner product of forms}
    \langle\alpha, \beta\rangle \Vol= \al \we \ast \be = \frac{1}{k!} \alpha_{i_1, \cdots, i_k} \beta^{i_1, \cdots, i_k} \Vol,\end{equation}
and so $|\alpha|^2= \langle \alpha, \alpha\rangle = \frac{1}{k!}|\alpha|_g^2$, where $|\!\cdot\!|_g^2$ denotes the usual inner product on tensors induced by $g$. 

In this paper, $M$ will denote a connected, smooth $7$-manifold. A $G_2$-structure on $M$ is equivalent to the choice of a $3$-form $\vp \in \Omega^3(M)$ which equips each tangent space of $M$ with the structure of a triple product arising from a $7$-dimensional cross product. We then say that $(M, \vp)$ is a $G_2$-structure, where $\vp$ is the associated $3$-form. Produced by any $G_2$-structure $(M, \vp)$ is an orientation and metric $g_{\vp}$, defined by
\begin{equation}\label{equation metric and orientation formula}g_{\vp}(X,Y) \Vol = \frac{1}{6}\io_X \vp \we \io_Y \vp \we \vp,\end{equation}
where $X$ and $Y$ are vector fields on $M$ and $\Vol$ is the $7$-dimensional volume form on $M$ induced by $g_{\vp}$. We will sometimes suppress the subscript in $g_{\vp}$ and simply write $g$ for brevity. 

The metric and orientation produced by $\vp$ induce a Hodge star. Using the Hodge star, we define $\psi := \ast \vp$. We then define the torsion of a $G_2$-structure $\vp$ as the two-tensor $T = T_{ij}$ given by
$$T_{ij} = \frac{1}{24}\na_i \vp_{abc} \psi_j^{\;\,abc}.$$
A classical fact in $G_2$-geometry is that $\Hol(g_{\vp})\subseteq G_2$ if and only if the $G_2$-structure $(M,\vp)$ is torsion-free, i.e.\ $T=0$~\cite{FernandezGray82}. Moreover, torsion-free $G_2$-structures induce Ricci flat metrics, so $\Ric(g_{\vp})=0$.

For each $p\in M$, the $G_2$-structure $\vp$ induces a decomposition of $\La^k(T_p^* M)$ into $G_2$-irreducible components, orthogonal with respect to $g_{\vp}$. There is a corresponding splitting of $\Om^k(M)$ into $g_{\vp}$-orthogonal subbundles. In particular,  
\begin{align}
\Om^2(M) &= \Om^2_7(M) \oplus \Om^2_{14}(M),\\
\Om^3(M) &= \Om^3_1(M) \oplus \Om^3_7(M) \oplus \Om^3_{27}(M),
\end{align}
where $\Om^k_{\ell}(M)$ denotes a rank $\ell$ subbundle of $\Om^k(M)$. 

If $\eta \in \Om^2_{14}(M)$, then
\begin{align}
\eta \we \vp &= -\ast \eta,\label{equation om214 condition}\\
|\eta^2|^2 &= |\eta|^4,\label{equation tau squared}\\
|\eta^3|^2 &\leq \frac{2}{3}|\eta|^6.\label{equation tau cubed}
\end{align}
The identity~\eqref{equation om214 condition} comes from~\cite[(2.14)]{BryantRemarks}. We note that the sign in~\eqref{equation om214 condition} depends on our orientation convention. Identities~\eqref{equation tau squared} and ~\eqref{equation tau cubed} come from~\cite[(2.21), (2.22)]{BryantRemarks}.







We say that $(M, \vp)$ is a closed $G_2$-structure if $d\vp = 0$. 
If $\vp$ is a closed $G_2$-structure, then its torsion is a $2$-form in $\Om^2_{14}(M)$~\cite[Prop.\ 1]{BryantRemarks} (see also~\cite[Section 2.1]{LotayWeiLaplacianFlow}). In particular,
\begin{equation}\label{equation antisymmetry of T}
    T_{ij} = -T_{ji}.
\end{equation}
We define a two-form $\tau$ such that $\tau_{ij} = -2T_{ij}$, and we have that
\begin{equation}\tau = \frac{1}{2} \tau_{ij} dx^i \wedge dx^j \in \Om^2_{14}(M).\end{equation}

\noindent A crucial fact about closed $G_2$-structures is that the scalar curvature $R$ satisfies
\begin{equation}\label{equation R = -|T|^2}R = -|T|_g^2  = g^{ij}T_{il}T^{l}_{\;j} =  - T_{ij}T^{ij}= -\frac{1}{2}|\tau|^2.\end{equation}

Let $S^2 T^*\!M$ be the space of symmetric two-tensors on $M$. As in~\cite[(2.17)]{BryantRemarks}, we may define a map $i_{\vp}: S^2 T^*\!M \to \Om^3(M)$ by
\begin{align}\label{equation iophi definition}
    i_{\vp}(h)_{ijk} &= \frac{1}{2} h_i^l\vp_{ljk} dx^i \wedge dx^j \wedge dx^k\nonumber\\
    &= \frac{1}{6} \big(h_i^l \vp_{ljk} - h_j^l \vp_{lik} - h_k^l \vp_{lji}\big) dx^i \wedge dx^j \wedge dx^k,
\end{align}
where $h = h_{ij}$ is a symmetric two-tensor. The map $i_{\vp}$ is a bijection from trace-free symmetric two-tensors to $\Om^3_{27}(M)$ and so $i_{\vp}(S^2_0 T^*\!M) = \Om^3_{27}(M)$. 

Definition~\eqref{equation iophi definition}, matches the definitions of Karigiannis~\cite{KarigiannisFlows09} and Lotay--Wei~\cite{LotayWeiLaplacianFlow} but differs from Bryant's definition by a factor of $\frac{1}{2}$~\cite[(2.17)]{BryantRemarks}. With our definition, we have that $i_{\vp}(g_{\vp}) = 3 \vp$.

We will now calculate $i_{\vp}(\Ric)$ in our conventions, based on work of Bryant~\cite[(4.39)]{BryantRemarks}. 
\begin{lem}[\!\mbox{\cite[(4.39)]{BryantRemarks}}]\label{lemma iophi of Ric}
Let $(M, \vp)$ be a closed $G_2$-structure. Then,
    \begin{equation}\label{equation iophi of Ric}
        i_{\vp}(\Ric) = -d\tau  + \frac{1}{2}\!\ast\!(\tau \we \tau).
    \end{equation}
\end{lem}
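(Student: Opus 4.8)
The plan is to reduce the identity to a pointwise computation and to match $G_2$-irreducible components. Both sides are $3$-forms, so I would first pin down where they live. Since $i_{\vp}$ sends $g_{\vp}$ to $3\vp \in \Om^3_1(M)$ and sends trace-free symmetric tensors isomorphically onto $\Om^3_{27}(M)$, writing $\Ric = \tfrac{R}{7} g_{\vp} + \Ric_0$ gives $i_{\vp}(\Ric) = \tfrac{3R}{7}\vp + i_{\vp}(\Ric_0) \in \Om^3_1(M)\oplus\Om^3_{27}(M)$, with no $\Om^3_7(M)$-component. On the right-hand side, differentiating the structure equation $d\psi = \tau\we\vp = -\!\ast\!\tau$ (the identity~\eqref{equation om214 condition}) and using $d\vp = 0$ yields $d\tau\we\vp = 0$; since the $G_2$-equivariant map $\Om^3\to\Om^6$, $\alpha\mapsto\alpha\we\vp$, vanishes precisely on $\Om^3_1\oplus\Om^3_{27}$ (it is nonzero only on the $\Om^3_7$-summand, as $\vp\we\vp=0$), this shows $d\tau$ has no $\Om^3_7$-component. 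Likewise $\tau\we\tau$ lies in the image of $\mathrm{Sym}^2(\Om^2_{14})$, which contains no $\Om^4_7$-summand, so $\ast(\tau\we\tau)$ has no $\Om^3_7$-component either. It therefore suffices to match the $\Om^3_1$- and $\Om^3_{27}$-components separately.

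For the $\Om^3_1$-component I would pair both sides with $\vp$. Using $|\vp|^2 = 7$ in the convention~\eqref{equation inner product of forms} and orthogonality of $\Om^3_{27}$ to $\vp$, the left side pairs to $3R$. For the right side, $\langle d\tau, \vp\rangle = \tfrac{1}{2}\na_i\tau_{jk}\,\vp^{ijk}$, and since $\tau\in\Om^2_{14}$ satisfies $\tau_{jk}\vp^{ijk}=0$, the relation $\na_i(\tau_{jk}\vp^{ijk})=0$ trades the derivative onto $\vp$ via $\na_i\vp_{jkl} = T_{im}\psi^m_{\ jkl}$ (recall $\tau = -2T$), converting this pairing into a quadratic expression in $\tau$. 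Together with the coordinate value of $\langle\ast(\tau\we\tau),\vp\rangle$, this must reproduce $3R = -\tfrac{3}{2}|\tau|^2$, consistent with~\eqref{equation R = -|T|^2}. This is the easy consistency check.

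The substance is the $\Om^3_{27}$-component. Here I would start from the coordinate formula for the Ricci tensor of a closed $G_2$-structure, of the schematic form $\Ric_{ij} = (\text{symmetrized }\na\tau) + (\text{quadratic in }\tau)$, obtained by commuting covariant derivatives in $\na_i\vp_{jkl} = T_{im}\psi^m_{\ jkl}$ and contracting with $\psi$ (the $G_2$-Bianchi identity). Applying the definition~\eqref{equation iophi definition} of $i_{\vp}$ to this tensor and simplifying with the standard contraction identities for $\vp$ and $\psi$ (e.g.\ $\vp_{ijk}\vp^{abk} = \delta^a_i\delta^b_j - \delta^a_j\delta^b_i + \psi_{ij}^{\ \ ab}$ and $\vp_{iab}\vp_j^{\ ab} = 6g_{ij}$), I would collapse the linear-in-$\na\tau$ part to $-d\tau$ and the quadratic part to $\tfrac{1}{2}\ast(\tau\we\tau)$; the antisymmetric part of $\na\tau$, which would contribute an $\Om^3_7$-term, cancels exactly as forced by the first paragraph. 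Computing $\ast(\tau\we\tau)$ directly in coordinates and invoking $\tau_{jk}\vp^{ijk}=0$ then fixes the quadratic constant $\tfrac12$.

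The main obstacle is bookkeeping the constants. Every coefficient in the final identity is sensitive to the conventions fixed above — the $\tfrac12$ in~\eqref{equation iophi definition}, the sign in $\tau\we\vp = -\!\ast\!\tau$, the normalization $\tau = -2T$, and the combinatorial factors in~\eqref{equation inner product of forms} — so the real work is carrying the contraction identities through with exact coefficients rather than up to a constant. A secondary subtlety is that the linear term is genuinely a curvature term: it requires the Bianchi-type identity relating $\na\tau$ to the Riemann tensor, and one must be careful to isolate the symmetric part that survives under $i_{\vp}$ and to verify that the antisymmetric ($\Om^3_7$) part drops out.
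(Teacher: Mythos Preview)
Your approach is correct in outline but takes a substantially longer and genuinely different route than the paper. The paper does not derive the Ricci formula from scratch; it simply quotes Bryant's identity for the trace-free part,
\[
i_{\vp}(\Ric^0) \;=\; \tfrac{3}{14}|\tau|^2\,\vp \;-\; d\tau \;+\; \tfrac{1}{2}\!\ast\!(\tau\we\tau),
\]
and then adds the trace: since $i_{\vp}(g_{\vp}) = 3\vp$ and $R = -\tfrac{1}{2}|\tau|^2$, the extra piece $i_{\vp}\bigl(\tfrac{R}{7}g_{\vp}\bigr) = \tfrac{3R}{7}\vp = -\tfrac{3}{14}|\tau|^2\vp$ cancels the first term on the right, and the lemma follows in four lines. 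Your strategy --- decomposing both sides into $\Om^3_1\oplus\Om^3_7\oplus\Om^3_{27}$, arguing representation-theoretically that the $\Om^3_7$-part vanishes on each side, matching the $\Om^3_1$-part by pairing with $\vp$, and then extracting the $\Om^3_{27}$-part from the coordinate Ricci formula via the $G_2$-Bianchi identity --- is essentially a re-derivation of Bryant's (4.39) itself. The advantage of your route is that it is self-contained and gives a structural reason for the absence of the $\Om^3_7$-component; the cost is that the ``substance'' you correctly locate in the $\Om^3_{27}$-match is precisely the hard tensor computation the paper outsources to the reference, and your proposal only sketches it. Your $\Om^3_1$- and $\Om^3_7$-arguments are sound as stated.
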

\begin{proof}
For a closed $G_2$-structure~\cite[(4.39)]{BryantRemarks}, Bryant found that
\begin{equation}\label{equation Bryant's i(Ric)}
i_{\vp}(\Ric^0) = \frac{3}{14}|\tau|^2 \vp  - d\tau  + \frac{1}{2}\!
\ast\!(\tau \we \tau),\end{equation}
which we have adjusted to match our definition of $i_{\vp}$. We now find that
\begin{align}
    i_{\vp}(\Ric) &= i_{\vp}\Big(\Ric^0 + \frac{R}{7} g_{\vp}\Big)\nonumber\\
    &= i_{\vp}(\Ric^0) + \frac{3R}{7}\vp\nonumber\\
\intertext{Using~\eqref{equation R = -|T|^2},}
    &= i_{\vp}(\Ric^0) - \frac{3}{14}|\tau|^2 \vp\nonumber\\
\intertext{Applying~\eqref{equation Bryant's i(Ric)},}
    &= -d\tau  + \frac{1}{2}\!\ast\!(\tau \we \tau).
\end{align}
\end{proof}
We will make use of Lemma \ref{lemma iophi of Ric} using the following lemma, which is well-known and can be found in~\cite[Prop.\ 2.15]{KarigiannisFlows09}. It follows from a standard tensor computation of $i_{\vp}(U) \we \ast i_{\vp}(V)$ using~\eqref{equation iophi definition} and then an application of known $G_2$-identities\cite[(2.6), (2.8)]{BryantRemarks}. 
\begin{lem}[\!\mbox{\cite[Prop.\ 2.15]{KarigiannisFlows09}}]\label{lemma iophi inner product}
   Let $(M, \vp)$ be a $G_2$-structure. If $U = U_{ij}$ and $V = V_{kl}$ are symmetric two-tensors, then
    $$i_{\vp}(U) \we \ast i_{\vp}(V) = \big(g^{ij}U_{ij} g^{kl}V_{kl} + 2U^{ij}V_{ij}\big)\!\Vol.$$
\end{lem}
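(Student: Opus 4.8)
The plan is to reduce the wedge--star product to a single full tensor contraction via the inner product formula~\eqref{equation inner product of forms}, and then to evaluate that contraction using the standard $G_2$ contraction identities~\cite[(2.6),(2.8)]{BryantRemarks}. Since $i_{\vp}(U)$ and $i_{\vp}(V)$ are $3$-forms, \eqref{equation inner product of forms} gives $i_{\vp}(U)\we\ast i_{\vp}(V) = \tfrac{1}{6}\,i_{\vp}(U)_{ijk}\,i_{\vp}(V)^{ijk}\,\Vol$, so everything comes down to computing the scalar $i_{\vp}(U)_{ijk}\,i_{\vp}(V)^{ijk}$. First I would extract the totally antisymmetric components of $i_{\vp}(h)$ from~\eqref{equation iophi definition}; using that $\vp$ is totally antisymmetric, these are $i_{\vp}(h)_{ijk} = h_i^{\,l}\vp_{ljk} + h_j^{\,l}\vp_{ilk} + h_k^{\,l}\vp_{ijl}$, which one can sanity-check against the stated normalization $i_{\vp}(g_{\vp}) = 3\vp$. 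Getting the combinatorial factor here exactly right is essential, since the content of the lemma is precisely the numerical constants.

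Next I would exploit antisymmetry to cut down the number of terms. Because $i_{\vp}(V)^{ijk}$ is totally antisymmetric, contracting it against each of the three cyclic summands of $i_{\vp}(U)_{ijk}$ yields the same scalar after relabelling dummy indices, so $i_{\vp}(U)_{ijk}\,i_{\vp}(V)^{ijk} = 3\,U_i^{\,l}\vp_{ljk}\,i_{\vp}(V)^{ijk}$. Expanding $i_{\vp}(V)^{ijk}$ in the same way, and again using the symmetry of $V$ together with the antisymmetry of $\vp_{ljk}$ in $j,k$, the three resulting terms collapse to two inequivalent contractions of pairs of $\vp$'s: a doubly-contracted product of the form $\vp_{ljk}\vp_m^{\;jk}$ and a singly-contracted product of the form $\vp_{ljk}\vp^{imk}$.

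At this stage I would invoke the two $G_2$ contraction identities, $\vp_{ljk}\vp_m^{\;jk} = 6\,g_{lm}$ and $\vp_{ijp}\vp_{klp} = g_{ik}g_{jl} - g_{il}g_{jk} \pm \psi_{ijkl}$. Substituting these, the doubly-contracted term produces a multiple of $U^{ij}V_{ij}$, while the singly-contracted term produces the trace term $g^{ij}U_{ij}\,g^{kl}V_{kl}$, a further multiple of $U^{ij}V_{ij}$, and a term involving $\psi$. The $\psi$-term vanishes: $\psi$ is a totally antisymmetric $4$-form while $U$ and $V$ are symmetric, so the contraction of $\psi$ against a symmetric pair of indices is zero, and in particular the sign ambiguity of $\psi$ above is irrelevant. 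Collecting the surviving terms and restoring the factor $\tfrac{1}{6}$ then yields $\big(g^{ij}U_{ij}\,g^{kl}V_{kl} + 2U^{ij}V_{ij}\big)\Vol$. The main obstacle is purely bookkeeping: tracking the numerical constants through the reduction so that the coefficients emerge as exactly $1$ and $2$ rather than some overall rescaling, which is the entire reason for stating the identity with explicit constants.
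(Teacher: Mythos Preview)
Your proposal is correct and follows essentially the same approach the paper indicates: a direct tensor computation of $i_{\vp}(U)\wedge\ast i_{\vp}(V)$ from the definition~\eqref{equation iophi definition}, reduced via the $G_2$ contraction identities~\cite[(2.6),(2.8)]{BryantRemarks}, with the $\psi$-terms vanishing by symmetry. The paper does not spell out the computation beyond this outline, so your write-up is in fact more detailed than what appears there.
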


We end this section with a useful identity for closed $G_2$-structures in Lemma \ref{lemma integral identity Ric T2}, which is the linchpin of the proofs of Theorems \ref{theorem compact negative Ric} and \ref{theorem Ricci pinched Einstein}. We note that the same identity appeared in a different form in ~\cite[Cor.\ 7.6]{CleytonIvanovCurvatureDecomp08}, but we give a simpler proof here. One could also prove Lemma \ref{lemma integral identity Ric T2} by starting with Bryant's expression for $d(\tau^3)$~\cite[(4.45)]{BryantRemarks}, but this approach is more difficult.
\begin{lem}[\!\mbox{\cite[Cor.\ 7.6]{CleytonIvanovCurvatureDecomp08}}] \label{lemma integral identity Ric T2}
Let $(M, \vp)$ be a closed $G_2$-structure. Then,
\begin{equation}\label{equation identity of Lemma 2.4}
24 R_{ij}T^{il}T_{l}^{\;j} = \ast d(\tau^3),\end{equation}
where $R_{ij}$ is the Ricci curvature of $g_{\vp}$. 

In particular, if $M$ is a closed manifold,
$$\int_M R_{ij}T^{il}T_{l}^{\;j} = 0.$$
\end{lem}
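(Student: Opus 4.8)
The plan is to convert the pointwise tensor identity of Lemma~\ref{lemma iophi of Ric} into a scalar equation by wedging with $\tau\we\tau$ and applying the Hodge star, and then to evaluate every pairing that appears using Lemma~\ref{lemma iophi inner product}. First I would note that, since $d\tau$ has odd degree and $\tau$ has even degree, $d(\tau^3)=3\,d\tau\we\tau\we\tau$, so $\ast d(\tau^3)=3\ast(d\tau\we\tau\we\tau)$. Substituting $d\tau = -i_\vp(\Ric)+\tfrac12\ast(\tau\we\tau)$ from Lemma~\ref{lemma iophi of Ric} gives
\[
\ast d(\tau^3) = -3\,\ast\!\big(i_\vp(\Ric)\we\tau\we\tau\big) + \tfrac32\,\ast\!\big(\ast(\tau\we\tau)\we\tau\we\tau\big).
\]

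The second term is immediate: since $\ast\ast=\mathrm{id}$ in dimension seven, $\ast(\tau\we\tau)\we\tau\we\tau = |\tau\we\tau|^2\,\Vol$, which by~\eqref{equation tau squared} equals $|\tau|^4\,\Vol$, so this term contributes $\tfrac32|\tau|^4$. For the first term, $i_\vp(\Ric)\we\tau\we\tau = \langle i_\vp(\Ric),\ast(\tau\we\tau)\rangle\,\Vol$, so the whole computation reduces to evaluating $\langle i_\vp(\Ric),\ast(\tau\we\tau)\rangle$.

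The crux is the pointwise $G_2$-identity
\begin{equation*}
\ast(\tau\we\tau) = -\,i_\vp(S)\;-\;|\tau|^2\,\vp, \qquad S_{ij}:=\tau_{ik}\tau^{k}_{\;j}, \tag{$\dagger$}
\end{equation*}
where $S$ is symmetric with $\operatorname{tr}S = -|\tau|_g^2 = -2|\tau|^2$. I would establish $(\dagger)$ as follows. A pointwise representation-theoretic argument (the symmetric square of $\Om^2_{14}\cong\mathfrak{g}_2$ contains no copy of the $7$-dimensional representation) shows that $\tau\we\tau$ has no $\Om^4_7$-component, so $\ast(\tau\we\tau)=i_\vp(W)$ for a symmetric tensor $W$, necessarily of the $G_2$-natural form $W = aS + b|\tau|^2 g_\vp$. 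The trace of $W$ is fixed by pairing with $\vp$: from $i_\vp(g_\vp)=3\vp$ and Lemma~\ref{lemma iophi inner product} one has $\langle i_\vp(W),\vp\rangle = 3\operatorname{tr}W$, while $\langle\ast(\tau\we\tau),\vp\rangle = \langle\tau\we\tau,\psi\rangle = -|\tau|^2$ since $\tau\we\tau\we\vp = -|\tau|^2\,\Vol$ by~\eqref{equation om214 condition}; this yields the linear relation $-2a+7b=-\tfrac13$. The coefficient $a$ is then pinned down by the norm identity~\eqref{equation tau squared}, $|i_\vp(W)|^2=|\tau|^4$, which also requires the quartic trace identity $|S|_g^2=|\tau|^4$; the latter is special to $\mathfrak{g}_2\subset\mathfrak{so}(7)$ and follows from the fact that the angles $\theta_1,\theta_2,\theta_3$ in the spectrum $\{0,\pm i\theta_1,\pm i\theta_2,\pm i\theta_3\}$ of an element of $\mathfrak{g}_2$ satisfy $\theta_1+\theta_2+\theta_3=0$, forcing $\operatorname{tr}(\tau^4)=\tfrac14(\operatorname{tr}\tau^2)^2$. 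Solving the two relations (the residual sign being fixed by a single direct component computation, e.g.\ on the flat model) gives $a=-1$, $b=-\tfrac13$, which is $(\dagger)$.

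With $(\dagger)$ in hand the rest is bookkeeping. Using Lemma~\ref{lemma iophi inner product}, $i_\vp(g_\vp)=3\vp$, the identity $R=-\tfrac12|\tau|^2$ from~\eqref{equation R = -|T|^2}, and $\operatorname{tr}S=-2|\tau|^2$, one computes $\langle i_\vp(\Ric),\ast(\tau\we\tau)\rangle = \tfrac12|\tau|^4 - 2\,R^{ij}S_{ij}$. Feeding this into the displayed expression, the two $|\tau|^4$-contributions cancel and one is left with $\ast d(\tau^3) = 6\,R^{ij}S_{ij} = 6\,R_{ij}\tau^{il}\tau_l^{\;j}$; since $\tau=-2T$ this is precisely $24\,R_{ij}T^{il}T_l^{\;j}$. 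For the integral statement, when $M$ is closed the $7$-form $d(\tau^3)$ is exact, so $\int_M 24\,R_{ij}T^{il}T_l^{\;j}\,\Vol = \int_M \ast d(\tau^3)\,\Vol = \int_M d(\tau^3) = 0$ by Stokes' theorem. The main obstacle is the exact evaluation of the $G_2$-algebra in $(\dagger)$, and in particular the $\mathfrak{g}_2$-specific quartic identity $|S|_g^2=|\tau|^4$ needed to fix the constant $a$.
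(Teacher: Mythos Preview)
Your proof is correct, and the final bookkeeping matches the paper's identity exactly. The route, however, differs from the paper's in a genuine way. The paper does not derive your identity~$(\dagger)$ from $G_2$-representation theory; instead it imports the Lotay--Wei computation $i_\vp(-\Ric+\tfrac{1}{3}Rg-2T^2)=d\tau$ and combines it with Lemma~\ref{lemma iophi of Ric} to cancel the $d\tau$ terms, obtaining $i_\vp(T^2)=-\tfrac{1}{4}\ast(\tau^2)+\tfrac{1}{2}R\vp$, which is precisely your~$(\dagger)$ after rescaling $S=4T^2$. The paper then expands $i_\vp(\Ric)\wedge\ast i_\vp(T^2)$ and simplifies using the $\Om^3_{27}$-decomposition of $d\tau$, $\vp\wedge\ast\vp=7\Vol$, and~\eqref{equation om214 condition}. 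Your organization is arguably cleaner at the end (you go straight for $\ast d(\tau^3)=3\ast(d\tau\wedge\tau^2)$ and the $|\tau|^4$-terms cancel on the nose), but the price is that you must supply~$(\dagger)$ yourself: the $G_2$-naturality ansatz $W=aS+b|\tau|^2 g$, the $\mathfrak{g}_2$-specific quartic identity $\operatorname{tr}(\tau^4)=\tfrac{1}{4}(\operatorname{tr}\tau^2)^2$ via the eigenvalue constraint $\theta_1+\theta_2+\theta_3=0$, and a residual sign check. The paper's route trades this algebra for an appeal to an external curvature identity; yours is more self-contained but leans harder on the pointwise $G_2$-structure.
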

\begin{proof}
First, we note that for a closed $G_2$-structure, $d\tau = \De_{\vp}\vp$~\cite[(2.17)]{LotayWeiLaplacianFlow}. Then, by a computation of Lotay-Wei~\cite[(3.2)--(3.4)]{LotayWeiLaplacianFlow},
\begin{equation}\label{equation Lotay wei computation of io phi}
    i_{\vp}\Big(-\Ric + \frac{1}{3}Rg - 2 T^2\Big) = \De_{\vp}\vp = d\tau,
\end{equation} 
where $T^2$ is the symmetric two-tensor given by $(T^2)_{ij} = T_i^{\;l}T_{lj}$. We note that~\eqref{equation Lotay wei computation of io phi} could alternatively be derived from~\cite[(4.37)]{BryantRemarks}. Rearranging~\eqref{equation Lotay wei computation of io phi}, we find that
\begin{align}\label{equation iphi T2 computation}
    i_{\vp}(T^2) &= - \frac{1}{2} d\tau + \frac{1}{2}i_{\vp}\Big(-\Ric + \frac{1}{3}Rg\Big)\nonumber\\
\intertext{Using that $i_{\vp}(g) = 3 \vp$,}
    &= - \frac{1}{2} d\tau - \frac{1}{2}i_{\vp}(\Ric) + \frac{1}{2}R \vp\nonumber\\
\intertext{Using Lemma \ref{lemma iophi of Ric},}
    &= - \frac{1}{2}d\tau + \frac{1}{2} d\tau - \frac{1}{4} \!\ast\!(\tau^2) + \frac{1}{2} R \vp\nonumber\\
    &= - \frac{1}{4}\!\ast\! (\tau^2) + \frac{1}{2} R \vp.
    \end{align}
Letting $R_{ij} := \Ric_{ij}$ and using~\eqref{equation R = -|T|^2},
\begin{align}
    \big(R^2 + 2R_{ij}T^{il}T_{l}^{\;j}\big) \Vol &= \big(g^{ij}R_{ij}g^{kl}(T^2)_{kl} + 2R^{ij}(T^2)_{ij}\big) \Vol\nonumber\\
\intertext{Applying Lemma \ref{lemma iophi inner product} with $U = \Ric$ and $V = T^2$,}
    &= i_{\vp}(\Ric) \we \ast i_{\vp}(T^2)\nonumber\\
\intertext{Applying Lemma \ref{lemma iophi of Ric} and ~\eqref{equation iphi T2 computation},}
    &= \Big(\!-\!d\tau + \frac{1}{2}\!\ast\!(\tau^2) \Big) \we \ast\Big(\!-\!\frac{1}{4}\!\ast\! (\tau^2) + \frac{1}{2} R \vp\Big)\nonumber\\
&= \frac{1}{4} d\tau \we \tau^2 - \frac{1}{2}R d\tau \we \ast \vp - \frac{1}{8}|\tau^2|^2\Vol + \frac{R}{4}\!\ast\!(\tau^2) \we \ast \vp\nonumber\\
\intertext{Using the symmetry of the inner product on forms and recalling that $|\tau^2|^2 = |\tau|^4$ by~\eqref{equation tau squared},}
&= \frac{1}{12} d(\tau^3) - \frac{1}{2}R d\tau \we \ast \vp - \frac{1}{8}|\tau|^4 \Vol + \frac{R}{4}\vp \we \tau^2\nonumber\\
\intertext{By the fact that there is $\ga \in \Om^3_{27}(M)$ such that $d\tau = \frac{1}{7}|\tau|^2 \vp + \ga$ and that $\ga \we \ast \vp = \ga \we \psi = 0$~(see \cite[(2.14), (4.35)]{BryantRemarks}),}
&= \frac{1}{12} d(\tau^3) - \frac{1}{14}R|\tau|^2 \vp \we \ast \vp - \frac{1}{8}|\tau|^4 \Vol + \frac{R}{4}\vp \we \tau^2\nonumber\\
\intertext{Using~\eqref{equation om214 condition},}
&= \frac{1}{12} d(\tau^3) - \frac{1}{14}R|\tau|^2 \vp \we \ast \vp - \frac{1}{8}|\tau|^4 \Vol- \frac{R}{4}|\tau|^2 \Vol\nonumber\\
\intertext{By ~\eqref{equation R = -|T|^2} and the fact that $\vp \we \ast \vp = 7 \Vol$ (which follows from $\vp_{ljk} \vp_p^{\;jk} = 6 g_{lp}$~\cite[(2.6)]{BryantRemarks}),}
&= \frac{1}{12} d(\tau^3) +R^2 \Vol
\end{align}
Subtracting $R^2 \Vol$ from both sides of the above calculation, we conclude the lemma.
\end{proof}

\section{Proofs of Main Results}
We begin with a simple but useful observation, which will allow us to find signs on certain expressions. 
\begin{lem}\label{lemma negative semidefinite T2}
    Let $T^2$ be the symmetric two-tensor defined by $(T^2)_{ij} := T_{i}^{\;l} T_{lj}$. Then, $T^2$ is negative semidefinite, i.e.\ for each $p \in M$, $T^2|_{p}$ is a negative semidefinite bilinear form. 
\end{lem}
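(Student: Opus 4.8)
The plan is to exploit the single structural fact that distinguishes the torsion of a \emph{closed} $G_2$-structure: by~\eqref{equation antisymmetry of T}, the two-tensor $T$ is antisymmetric, $T_{ij} = -T_{ji}$. This is exactly what will force a sign on the quadratic form of $T^2$. First I would reinterpret $T$ as a $g_{\vp}$-skew-adjoint endomorphism of each tangent space: raising one index via the metric produces $T^i_{\;j} = g^{ik}T_{kj}$, and antisymmetry of $T_{ij}$ is equivalent to the statement that $g_{\vp}(Tx, y) = -g_{\vp}(x, Ty)$ for all tangent vectors $x, y$ at a point $p \in M$.

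Next I would identify the symmetric two-tensor $(T^2)_{ij} = T_i^{\;l}T_{lj}$ with the bilinear form of the endomorphism composition $T \circ T$ (with one index lowered). This is a one-line index check: since $g^{kl}$ is symmetric, the expression $T_i^{\;l}T_{lj} = g^{lk}T_{ik}T_{lj}$ is exactly the symmetric form $v \mapsto g_{\vp}(T^2 v, \cdot)$, so there is no ambiguity in how $T^2$ pairs against a vector. With this in hand, for any $p \in M$ and any $v \in T_p M$ I would evaluate the form on the diagonal and apply skew-adjointness once:
\[
(T^2)_{ij} v^i v^j = g_{\vp}\big(T(Tv), v\big) = -g_{\vp}(Tv, Tv) = -|Tv|_{g_{\vp}}^2 \le 0,
\]
where the final inequality uses that $g_{\vp}$ is positive definite. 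Since $p$ and $v$ were arbitrary, $T^2|_p$ is a negative semidefinite bilinear form at every point, which is the claim.

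There is essentially no obstacle here: the statement is an immediate consequence of antisymmetry together with positivity of the metric. The only point requiring a little care is the index bookkeeping needed to confirm that the tensor $(T^2)_{ij} = T_i^{\;l}T_{lj}$ really is the quadratic form of the squared skew-adjoint operator, rather than some other contraction; once that identification is made, the sign drops out of the single application of $g_{\vp}(Tv, \cdot) = -g_{\vp}(\cdot, Tv)$. I expect no genuine difficulty and would keep the argument to a few lines.
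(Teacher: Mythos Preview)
Your argument is correct and is essentially the paper's own proof: the paper simply observes that $T_{ij}$ is antisymmetric, so $(T^2)^i_{\;j}$ is the square of a skew-symmetric matrix and hence negative semidefinite, which is exactly the computation $(T^2)_{ij}v^iv^j = -|Tv|^2 \le 0$ you wrote out. The only additional remark in the paper is an alternative route via an explicit $G_2$ frame from Bryant, but the primary proof is the same one-line linear algebra fact you used.
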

\begin{proof}
    Since $T_{ij}$ is antisymmetric, it follows that $(T^2)^i_j$ is the square of an antisymmetric matrix. This implies that $T^2$ is negative semidefinite. 

    Alternatively, since $T\in \Om^2_{14}$, we may express it explicitly at any point in a $G_2$ frame, as in~\cite[(2.20)]{BryantRemarks}. It follows from a direct computation in these coordinates that $T^2$ is negative semidefinite.
\end{proof}

\begin{proof}[Proof of Theorem \ref{theorem compact negative Ric}]
Suppose $M$ is a closed manifold admitting a closed $G_2$-structure $\vp$ with negative Ricci curvature.

Let $p \in M$, and let $\cE = \{e_1, \dots, e_7\}$ be an orthonormal basis for $T_p M$ which diagonalizes the symmetric two-tensor $(T^2)_{ij} = T_i^{\;l}T_{lj}$. Then,
\begin{equation}\label{equation integrand for integral identity}
    R_{ij}T^{il}T_{l}^{\;j} = \sum_{i=1}^7 R_{ii} T_i^{\;l}T_{li}.
\end{equation}
By assumption, $\Ric< 0$, so $R_{ii}<0$ for each $i$. Also, by Lemma \ref{lemma negative semidefinite T2}, $T_i^{\;l}T_{li} \leq 0$ for each $i$. Since $R_{ii}<0$ and $T_i^{\;l}T_{li}\leq 0$, we have that for each $i$,
\begin{equation}\label{equation integrand for integral identity 0}
    R_{ii} T_i^{\;l}T_{li}\geq 0.
\end{equation}
Applying~\eqref{equation integrand for integral identity 0} to ~\eqref{equation integrand for integral identity}, we have that for each $p \in M$,
\begin{equation}\label{equation integrand for integral identity 2}
    R_{ij}T^{il}T_{l}^{\;j} \geq 0.
\end{equation}
Since $M$ is a closed manifold, Lemma \ref{lemma integral identity Ric T2} tells us that 
\begin{equation}\label{equation integral identity negative Ric}
\int_M R_{ij}T^{il}T_{l}^{\;j} = 0.\end{equation}
Combining~\eqref{equation integral identity negative Ric} with~\eqref{equation integrand for integral identity 2}, we find that for each $p \in M$, 
\begin{equation}\label{equation integrand for integral identity 3}
    R_{ij}T^{il}T_{l}^{\;j} = 0.
\end{equation}
Applying~\eqref{equation integrand for integral identity} and~\eqref{equation integrand for integral identity 0} to~\eqref{equation integrand for integral identity 3} shows that $R_{ii}T_i^{\;l}T_{li}=0$ for each $i$. Since $R_{ii}<0$, we conclude that 
\begin{equation}\label{equation T2 zero}
    T_i^{\;l}T_{li}=0
\end{equation}
for each $i$. Since $\cE$ is a basis which diagonalizes $T^2$ at $p$, we conclude from~\eqref{equation T2 zero} that $T^2 = 0$ for each $p \in M$. Since $T$ is antisymmetric, this implies that $T=0$. Then, $(M, \vp)$ is torsion-free by definition, which implies that $\Ric =0$ for a contradiction. We conclude that there exists no closed manifold $M$ which admits a closed $G_2$-structure with negative Ricci curvature. 
\end{proof}

We now proceed to prove Theorem \ref{theorem Ricci pinched Einstein}. We first give a lemma, Lemma \ref{lemma almost einstein estimate on RT2}, which is needed in order to find a lower bound for the left-hand side of ~\eqref{equation identity of Lemma 2.4}. An alternative lower bound  can be found in~\cite[(4.45)]{BryantRemarks}, but Lemma \ref{lemma almost einstein estimate on RT2} appears to be better suited for our purposes. We note that the inequality we find in Lemma \ref{lemma almost einstein estimate on RT2} is actually an equality when $g_{\vp}$ is Einstein. This follows from letting $k_1 = k_2 = |R|/7$ and using ~\eqref{equation R = -|T|^2}. 

\begin{lem}\label{lemma almost einstein estimate on RT2}
    Let $(M, \vp)$ be a closed $G_2$-structure such that $g_{\vp}$ satisfies~\eqref{equation almost Einstein Ric bound} for $0< k_1 \leq k_2$. Then,
    $$R_{ij}T^{il}T_{l}^{\;j} \geq k_1 |R|.$$
\end{lem}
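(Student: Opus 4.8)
The plan is to prove this pointwise inequality by diagonalizing the tensor $T^2$ and reducing everything to a one-line linear-algebra estimate. Fix $p \in M$ and, exactly as in the proof of Theorem \ref{theorem compact negative Ric}, I would choose an orthonormal basis $\{e_1, \dots, e_7\}$ of $T_p M$ diagonalizing $(T^2)_{ij} = T_i^{\;l}T_{lj}$. By Lemma \ref{lemma negative semidefinite T2}, $T^2$ is negative semidefinite, so I may write its eigenvalues as $-\mu_1, \dots, -\mu_7$ with each $\mu_i \geq 0$.

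The crucial preliminary computation is the trace of $T^2$. Using the antisymmetry of $T$ together with \eqref{equation R = -|T|^2}, one finds
$$\mathrm{tr}(T^2) = g^{ij}T_i^{\;l}T_{lj} = T^{jl}T_{lj} = -|T|_g^2 = R.$$
Since the eigenvalues of $T^2$ are exactly the $-\mu_i$, this gives $\sum_{i=1}^7 \mu_i = -R = |R|$, where the last equality uses that $R \leq 0$ for a closed $G_2$-structure.

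Next I would evaluate the left-hand side in the chosen basis. Because $T^2$ is diagonal there with entries $-\mu_i$,
$$R_{ij}T^{il}T_l^{\;j} = \sum_{i=1}^7 R_{ii}(-\mu_i).$$
Reading the upper bound in \eqref{equation almost Einstein Ric bound} as an inequality of quadratic forms and evaluating on the unit vectors $e_i$ yields $R_{ii} = \Ric(e_i, e_i) \leq -k_1$ for each $i$. Since $\mu_i \geq 0$, this produces the termwise bound $-R_{ii}\mu_i \geq k_1\mu_i$, and summing over $i$ gives
$$R_{ij}T^{il}T_l^{\;j} = -\sum_{i=1}^7 R_{ii}\mu_i \geq k_1\sum_{i=1}^7 \mu_i = k_1|R|,$$
which is the desired inequality.

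I do not anticipate a genuine obstacle here, since the argument is essentially linear algebra once the trace identity is in hand. The only point requiring care is the sign bookkeeping: the negative semidefiniteness of $T^2$ and the negativity of the Ricci bound conspire to make each product $R_{ii}(-\mu_i)$ nonnegative, and it is precisely the upper Ricci bound $R_{ii} \leq -k_1$, rather than the lower bound involving $k_2$, that drives the estimate. As a sanity check on the constant, equality should hold throughout when $g_{\vp}$ is Einstein with $k_1 = k_2 = |R|/7$, matching the remark preceding the lemma.
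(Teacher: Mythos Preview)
Your proof is correct and essentially identical to the paper's: the only cosmetic difference is that the paper diagonalizes $\Ric$ (and uses that the diagonal entries of $T^2$ in any orthonormal basis are nonpositive), whereas you diagonalize $T^2$ (and use that the diagonal entries of $\Ric$ in any orthonormal basis are $\leq -k_1$). Either choice reduces the contraction $R_{ij}(T^2)^{ij}$ to $\sum_i R_{ii}(T^2)_{ii}$ and then applies the same termwise bound.
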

\begin{proof}
    Let $p \in M$, and let $\cE = \{e_1, \dots, e_7\}$ be an orthonormal basis for $T_p M$ which diagonalizes $\Ric$. It follows from the curvature restriction~\eqref{equation almost Einstein Ric bound} that there exists $\{\la_i\}_{i=1}^7 \subseteq \bR$ such that
\begin{equation}\label{equation eigenvalue bound for almost Einstein}
    -k_2\leq \la_i \leq -k_1<0,
    \end{equation}
and $R_{ij} = \la_i \de_i^j$ in the basis $\cE$ at $p$. Thus, at $p$,
\begin{equation}\label{equation diagonalized Ric traced with T2}
R_{ij}T^{il}T_{l}^{\;j} = \sum_{i=1}^7 \la_i T_{i}^{\;l}T_{li}.
\end{equation}
At $p$ in the basis $\cE$, equation ~\eqref{equation R = -|T|^2} becomes
    \begin{align}\label{equation R = -T2 in basis E}
R = \sum_{i=1}^7 T_{i}^{\;l}T_{li}\leq 0.
\end{align}
By Lemma \ref{lemma negative semidefinite T2}, we have that $T_i^{\;l}T_{li}\leq 0$  for all $i=1, \dots, 7$ at $p$ in the basis $\cE$. Combining this with~\eqref{equation eigenvalue bound for almost Einstein}, ~\eqref{equation diagonalized Ric traced with T2}, and~\eqref{equation R = -T2 in basis E}, we find that
\begin{align}
    R_{ij}T^{il}T_{l}^{\;j} \geq -k_1 \sum_{i=1}^7 T_i^{\;l}T_{li} = k_1|R|.
\end{align}
This concludes the lemma since $p$ is arbitrary.
\end{proof}

We will now lower bound the volume growth of closed $G_2$-structures with negatively pinched Ricci curvature. The key idea is to apply Lemma \ref{lemma almost einstein estimate on RT2} to~\eqref{equation identity of Lemma 2.4} and integrate over balls of arbitrary radius. Note that if you substitute the Einstein metric condition into~\eqref{equation identity of Lemma 2.4}, you obtain the same identity that Bryant used to rule out Einstein closed $G_2$-structures on closed manifolds~\cite[(4.40)]{BryantRemarks}. 

\begin{lem}\label{lemma volume growth Ricci pinched}
    Let $(M, \vp)$ be a closed $G_2$-structure whose metric $g_{\vp}$ is complete and satisfies~\eqref{equation almost Einstein Ric bound} for $0 < k_1 \leq k_2$. Then, for any $\eps>0$ and any $r> \eps$,
\begin{equation}\label{equation volume growth estimate}
 \Vol(B(r)) \geq \Vol(B(\eps)) \exp\left( \frac{6 \sqrt{3} \, k_1^2}{\sqrt{7}\, k_2^{3/2}}(r-\eps)\right).
\end{equation}
\end{lem}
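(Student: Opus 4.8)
The plan is to integrate the pointwise identity of Lemma \ref{lemma integral identity Ric T2}, namely $d(\tau^3) = 24\,R_{ij}T^{il}T_l^{\;j}\,\Vol$, over the geodesic ball $B(r)$, convert the right-hand side into a boundary term, and extract a differential inequality for $V(r) := \Vol(B(r))$. For the bulk side, taking the trace of the pinching hypothesis \eqref{equation almost Einstein Ric bound} gives $7k_1 \leq |R| \leq 7k_2$, so Lemma \ref{lemma almost einstein estimate on RT2} yields the pointwise bound $24\,R_{ij}T^{il}T_l^{\;j} \geq 24 k_1|R| \geq 168\,k_1^2$, whence $\int_{B(r)} d(\tau^3) \geq 168\,k_1^2\,V(r)$. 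For the boundary side, I would write the $6$-form $\tau^3$ as $\iota_W \Vol$ with $W = (\ast\tau^3)^\sharp$, so that $d(\tau^3) = (\mathrm{div}\,W)\,\Vol$ and formally $\int_{B(r)} d(\tau^3) = \int_{\partial B(r)} \langle W, \nu\rangle\, dA$, with integrand controlled by $|\langle W, \nu\rangle| \leq |W| = |\tau^3|$. Combining \eqref{equation tau cubed} with $|\tau|^2 = 2|R| \leq 14 k_2$, which follows from \eqref{equation R = -|T|^2}, gives $|\tau^3| \leq \sqrt{2/3}\,(14 k_2)^{3/2}$, so the boundary term is bounded by $\sqrt{2/3}\,(14 k_2)^{3/2}\,A(r)$, where $A(r)$ denotes the area of $\partial B(r)$.

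Comparing the two bounds yields $168\,k_1^2\,V(r) \leq \sqrt{2/3}\,(14 k_2)^{3/2}\,A(r)$. Since the coarea formula applied to the distance function $f = d(p,\cdot)$ gives $V(r) = \int_0^r A(s)\,ds$, the function $V$ is locally Lipschitz with $V'(r) = A(r)$ for a.e.\ $r$, and the estimate becomes the differential inequality $V'(r) \geq c\,V(r)$, where
\[ c = \frac{168\,k_1^2}{\sqrt{2/3}\,(14 k_2)^{3/2}} = \frac{6\sqrt{3}\,k_1^2}{\sqrt{7}\,k_2^{3/2}}. \]
Dividing by $V(r)$ and integrating $\tfrac{d}{dr}\log V(r) \geq c$ from $\eps$ to $r$ then gives $\log V(r) - \log V(\eps) \geq c\,(r-\eps)$, which is exactly the claimed estimate \eqref{equation volume growth estimate}.

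The main obstacle is that the geodesic spheres $\partial B(r)$ need not be smooth, so the naive divergence theorem used above is not justified; this is where geometric measure theory enters. To make the boundary identity rigorous I would avoid Stokes' theorem on $B(r)$ and instead test $\mathrm{div}\,W$ against a Lipschitz radial cutoff $\rho(f)$, integrate by parts (legitimate since $f$ is Lipschitz with $|\nabla f| = 1$ a.e.\ and $W$ is smooth with $\rho(f)$ of compact support by completeness), and apply the coarea formula to obtain $\int_M \rho(f)\,\mathrm{div}\,W\,\Vol = -\int_0^\infty \rho'(s)\int_{f^{-1}(s)} \langle W, \nabla f\rangle\, d\mathcal{H}^6\, ds$. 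Letting $\rho$ approximate the indicator of $[0,r]$ recovers, for a.e.\ $r$, the identity $\int_{B(r)} d(\tau^3) = \int_{f^{-1}(r)} \langle W, \nabla f\rangle\, d\mathcal{H}^6$, where $|\langle W, \nabla f\rangle| \leq |\tau^3|$ and $\mathcal{H}^6(f^{-1}(r)) = A(r)$, so all the bounds above hold for a.e.\ $r$. Since $V$ is absolutely continuous, the differential inequality may then be integrated without loss from the exceptional set, completing the argument.
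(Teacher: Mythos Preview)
Your proposal is correct and follows the same overall strategy as the paper: integrate the identity $24\,R_{ij}T^{il}T_l^{\;j}\Vol = d(\tau^3)$ over $B(r)$, bound the bulk below via Lemma~\ref{lemma almost einstein estimate on RT2} and $|R|\geq 7k_1$, bound the boundary contribution above via $|\tau^3|\leq\sqrt{2/3}\,|\tau|^3$ and $|\tau|^2=2|R|\leq 14k_2$, identify the boundary size with $V'(r)$ through the coarea formula, and integrate the resulting logarithmic differential inequality. Your constants match the paper's exactly.

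The only genuine difference is in the GMT machinery used to justify Stokes on the possibly non-smooth balls. The paper treats $B(r)$ as a set of finite perimeter, views $\partial\llbracket B(r)\rrbracket$ as an integer-rectifiable $6$-current, bounds $\partial\llbracket B(r)\rrbracket(\tau^3)$ by $\sup|\tau^3|\cdot\mathbf{M}(\partial\llbracket B(r)\rrbracket)$, and then invokes de Giorgi's structure theorem to identify the mass with $\mathcal{H}^6(\partial^*B(r))\leq\mathcal{H}^6(\{d_p=r\})$. Your route, writing $\tau^3=\iota_W\Vol$ and testing $\mathrm{div}\,W$ against a Lipschitz radial cutoff before applying coarea and Lebesgue differentiation, is more elementary and avoids currents and reduced boundaries altogether; it yields the same a.e.\ inequality $\int_{B(r)}d(\tau^3)\leq\sup|\tau^3|\cdot\mathcal{H}^6(f^{-1}(r))$ directly. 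One small point worth making explicit in your write-up: the expression $\langle W,\nabla f\rangle$ on $f^{-1}(r)$ is well-defined $\mathcal{H}^6$-a.e.\ for a.e.\ $r$ because the cut locus has $\Vol$-measure zero, so by coarea its slices have $\mathcal{H}^6$-measure zero for a.e.\ $r$.
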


\begin{proof}

Suppose $(M, \vp)$ is a closed $G_2$-structure on a noncompact $M$ such that $g_{\vp}$ is complete and satisfies~\eqref{equation almost Einstein Ric bound} for $0< k_1 \leq k_2$. Then, ~\eqref{equation almost Einstein Ric bound} implies that 
    \begin{equation}\label{equation R bound almost Einstein}
    7k_1 \leq |R|\leq 7k_2.
    \end{equation}

Now, let $p \in M$, and consider the distance function $d_p := d(p, \cdot): M \to \bR$. Recall that the function $d_p$ is Lipschitz on $M$ and is smooth away from the cut locus of $p$. In fact, away from the cut locus of $p$, $|\na d_p|= 1$ and $d_p$ is smooth. In particular, $d_p$ is smooth $\Vol_{g_{\vp}}$\!-almost everywhere on $M$.  

For $r>0$, let $B(r) = \{d_p < r\}$ be the open ball of radius $r$ centered around $p$. The coarea formula for Lipschitz functions on Riemannian manifolds~\cite[Theorem 3.1]{federer1959curvature} applied to $d_p$ implies that $r \mapsto\Vol(B(r))$ is absolutely continuous for $r>0$ and implies that there is a conull set $\cS \subset (0,\infty)$ such that for all $r \in \cS$, 
\begin{equation}\label{equation coarea formula implication}\frac{d}{dr}\Vol(B(r)) = \cH^6(\{d_p = r\}).\end{equation}
In addition, we may find the conull set $\cS \subseteq (0, \infty)$ such that $B(r)$ is a set of finite perimeter for all $r \in \cS$. Note that this follows in particular from the coarea formula for BV functions on metric measure spaces~\cite[Corollary 4.4]{miranda2003functions}. 

Now, let $r \in \cS$. Integrating~\eqref{equation identity of Lemma 2.4} on $B(r)$,
\begin{align}\label{equation integral on B(r)}
      \int_{B(r)} 24 R_{ij}T^{il}T_{l}^{\;j}
    &= \int_{B(r)} d(\tau^3)\nonumber\\
\intertext{Since $B(r)$ is a set of finite perimeter for $r \in \cS$, we may consider the associated $7$-current $\llbracket B(r)\rrbracket$, with orientation induced by that of $(M, \vp)$, and its boundary $\dd \llbracket B(r) \rrbracket$,  which is a locally integer rectifiable $6$-current. Now, let $\eta \in C^{\infty}_c(M, \bR)$ so that $0\leq \eta \leq 1$ on $M$ and $\eta \equiv 1$ on $\spt \llbracket B(r)\rrbracket$. Since $\eta \tau^3$ is compactly supported on $M$,}
    &= \dd\llbracket B(r)\rrbracket (\eta \tau^3)\nonumber\\
\intertext{We now bound $\dd\llbracket B(r)\rrbracket(\eta \tau^3)$ by the norm of $\eta \tau^3$ as a $6$-form on $M$ together with the mass of $\dd\llbracket B(r)\rrbracket$. Here, the mass $\bf{M}(\dd \llbracket B(r) \rrbracket)$ is defined using the inner product on forms~\cite[Section 6.1]{simon2014introduction}.}
    &\leq \Big(\sup_{M}|\eta \tau^3| \Big)\bf{M}(\dd \llbracket B(r)\rrbracket)\nonumber\\
\intertext{We note that $\sup_M |\eta \tau^3|$ is finite in this setting. Indeed, using that $\eta \leq 1$ and applying~\eqref{equation tau cubed} with $\tau \in \Om^2_{14}$ so that $|\tau^3|^2 \leq \frac{2}{3}|\tau|^6$,}
    &\leq \sqrt{\frac{2}{3}} \Big(\sup_M|\tau|^3\Big)\bf{M}(\dd \llbracket B(r)\rrbracket)\nonumber\\
\intertext{Applying ~\eqref{equation R = -|T|^2},}
    &= \frac{4}{\sqrt{3}} \Big(\sup_M |R|^{3/2}\Big) \bf{M}(\dd \llbracket B(r)\rrbracket)\nonumber\\
\intertext{Since $B(r)$ is a set of finite perimeter for $r \in \cS$, de Giorgi's structure theory for sets of finite perimeter says that $\mu_{\dd\llbracket B(r)\rrbracket} = \cH^6 \lfloor \dd^* \!B(r)$, where $\dd^* \!B(r)$ is the reduced boundary of $B(r)$ and $\mu_{\dd\llbracket B(r)\rrbracket}$ is the mass measure associated to $\dd\llbracket B(r)\rrbracket$. We refer to ~\cite{volkmann2010regularity} and~\cite[Definition 2.3(4)]{BackusGGTRiemannian23} for these facts in the Riemannian setting. Thus,}
    &= \frac{4}{\sqrt{3}} \Big(\sup_M |R|^{3/2}\Big) \cH^{6}(\dd^* \!B(r))\nonumber\\
\intertext{Since $\dd^* \!B(r) \subseteq \dd B(r) \subseteq \{d_p=r\}$ (see, for instance,~\cite[Remark 2.8]{LiWangGenericRegularity20}),}
    &\leq \frac{4}{\sqrt{3}} \Big(\sup_M |R|^{3/2}\Big) \cH^{6}(\{d_p = r\})\nonumber\\
\intertext{Applying~\eqref{equation R bound almost Einstein} to bound $\sup_M |R|^{3/2}$ and using ~\eqref{equation coarea formula implication} for $r \in \cS$,}
    &\leq \frac{4}{\sqrt{3}} (7k_2)^{3/2} \frac{d}{dr}\Vol\big(B(r)\big).
\end{align}
Thus, we find that for all $r \in \cS$, 
\begin{align}\label{equation Ricci pinched differential volume estimate}
    \frac{4}{\sqrt{3}} (7k_2)^{3/2} \frac{d}{dr}\Vol\big(B(r)\big) &\geq \int_{B(r)} \!24 R_{ij}T^{il}T_{l}^{\;j}\nonumber\\
\intertext{Applying Lemma \ref{lemma almost einstein estimate on RT2},}
    &\geq 24 k_1\int_{B(r)}\!|R|\nonumber\\
\intertext{Applying~\eqref{equation R bound almost Einstein},}
    &\geq 168 k_1^2 \Vol(B(r)).
\end{align}

Since $\Vol(B(r))$ is absolutely continuous for $r>0$, $r \mapsto\log\big(\!\Vol(B(r))\big)$ is also absolutely continuous for $r>0$ and in fact differentiable for $r \in \cS$. We then find from ~\eqref{equation Ricci pinched differential volume estimate} that for $r \in \cS$,
\begin{equation}\label{equation Ricci pinched differential volume estimate log}
    \frac{d}{dr}\Big(\log(\Vol(B(r))\Big)\geq \frac{6 \sqrt{3} \, k_1^2}{\sqrt{7}\, k_2^{3/2}}.
\end{equation}
Let $\eps>0$. Since $\cS$ is conull in $\bR$, we may integrate ~\eqref{equation Ricci pinched differential volume estimate log} from $\eps$ to any $r>\eps$ to find that
\begin{equation}\label{equation Ricci pinched integrated volume estimate}
    \Vol(B(r)) \geq \Vol(B(\eps)) \exp\left(\frac{6 \sqrt{3} \, k_1^2}{\sqrt{7}\, k_2^{3/2}}(r-\eps)\right).
\end{equation}
\end{proof}
\begin{rmk}
    There are strong restrictions on any closed $G_2$-structure satisfying~\eqref{equation volume growth estimate} with equality. In the proof of Lemma \ref{lemma volume growth Ricci pinched}, we applied inequality~\eqref{equation tau cubed} to the torsion, and equality in~\eqref{equation tau cubed} implies that $(M, \vp)$ has ``special torsion of negative type''~\cite[\S 3.4]{BallQuadratic}. Under the extra assumption that the closed $G_2$-structure is $\la$-quadratic, having special torsion of negative type implies that the metric is incomplete~\cite[\S 1.1.3]{BallQuadratic}.  
\end{rmk}

We now prove Theorem \ref{theorem Ricci pinched Einstein} using Lemma \ref{lemma volume growth Ricci pinched}.

\begin{proof}[Proof of Theorem \ref{theorem Ricci pinched Einstein}]

We now define $K_2 := \frac{k_2}{6}$, so that by~\eqref{equation almost Einstein Ric bound}, 
$$\Ric \geq -6K_2 g,$$
with $g = g_{\vp}$. Applying the Bishop--Gromov inequality for any $r \geq \eps>0$, we find that
\begin{equation}\label{equation Bishop-Gromov}
\Vol(B(r)) \leq \frac{\Vol(B(\eps))}{\Vol_{-K_2}(B(\eps))} \Vol_{-K_2}(B(r)),
\end{equation}
where $\Vol_{-K_2}(B(r))$ and $\Vol_{-K_2}(B(\eps))$ denote the volumes of radius $r$- and $\eps$-balls, respectively, in a complete, simply-connected $7$-manifold $N_{-K_2}$ of constant sectional curvature $-K_2<0$. 

Now, the volume of a ball of radius $r>0$ in $N_{-K_2}$ is given by
\begin{equation}\label{equation hyperbolic space volume growth}
    \Vol_{-K_2} (B(r)) = \Vol_{\bR^7}(\bS^6) \int_0^r \Big( \frac{\sinh(\sqrt{K_2}s)}{\sqrt{K_2}}\Big)^6 ds,
\end{equation}
where $\Vol_{\bR^7}(\bS^6)$ is the surface area of the unit $6$-sphere in Euclidean $\bR^7$.

Combining~\eqref{equation Bishop-Gromov} with~\eqref{equation hyperbolic space volume growth},
\begin{align}\label{equation volume estimate from Bishop-Gromov}
    &\Vol(B(r))\leq \Vol(B(\eps)) \frac{\int_0^r \sinh^6(\sqrt{K_2}s)\, ds}{\int_0^{\eps} \sinh^6(\sqrt{K_2}s)\, ds}\nonumber\\
\intertext{Computing these integrals and dividing by common factors,}
    &= \Vol(B(\eps)) \frac{\sinh(6\sqrt{K_2}r) - 9\sinh(4\sqrt{K_2}r)+45\sinh(2\sqrt{K_2}r) - 60 \sqrt{K_2}r}{\sinh(6\sqrt{K_2}\eps) - 9\sinh(4\sqrt{K_2}\eps)+45\sinh(2\sqrt{K_2}\eps) - 60 \sqrt{K_2}\eps}.
\end{align}

Combining the lower bound on volume growth found in Lemma \ref{lemma volume growth Ricci pinched} with the upper bound in~\eqref{equation volume estimate from Bishop-Gromov}, we find that for any $0 < \eps \leq r$,
\begin{align*}
    \exp\Bigg(&\frac{6 \sqrt{3}\,k_1^2}{\sqrt{7}\, k_2^{3/2}}(r-\eps)\Bigg)\nonumber\\
    &\leq \frac{\sinh(6\sqrt{K_2}r) - 9\sinh(4\sqrt{K_2}r)+45\sinh(2\sqrt{K_2}r) - 60 \sqrt{K_2}r}{\sinh(6\sqrt{K_2}\eps) - 9\sinh(4\sqrt{K_2}\eps)+45\sinh(2\sqrt{K_2}\eps) - 60 \sqrt{K_2}\eps},
\end{align*}
which implies that the following function is nondecreasing for $r>0$:
\begin{equation}\label{equation Ricci pinched volume contradiction estimate}
    r \mapsto \frac{\sinh(6\sqrt{K_2}r) - 9\sinh(4\sqrt{K_2}r)+45\sinh(2\sqrt{K_2}r) - 60 \sqrt{K_2}r}{\exp\Big( \frac{6 \sqrt{3}\,k_1^2}{\sqrt{7}\,k_2^{3/2}}r\Big)}.
\end{equation}
The numerator of the expression in~\eqref{equation Ricci pinched volume contradiction estimate} grows, up to a constant, like $C\exp(6\sqrt{K_2}r)$. Thus, if
\begin{equation}\label{equation Ricci pinched contradiction condition}
\frac{6 \sqrt{3}\,k_1^2}{\sqrt{7}\, k_2^{3/2}}>6 \sqrt{K_2} = \sqrt{6k_2},
\end{equation}
this would contradict the fact that the function in ~\eqref{equation Ricci pinched volume contradiction estimate} is nondecreasing.
The inequality~\eqref{equation Ricci pinched contradiction condition} is equivalent to 
\begin{equation}\label{equation k1/k2 condition}
\frac{k_1}{k_2} > \left(\frac{7}{18}\right)^{\frac{1}{4}} \cong .7897.
\end{equation}
We conclude that there does not exist any complete closed $G_2$-structure whose Ricci curvature satisfies~\eqref{equation almost Einstein Ric bound} with $k_1, k_2$ subject to inequality~\eqref{equation k1/k2 condition}. This finishes the proof of Theorem \ref{theorem Ricci pinched Einstein}.
\end{proof}

We now prove that Theorem \ref{theorem Ricci pinched Einstein} implies Corollary \ref{corollary complete noncompact Einstein}.

\begin{proof}[Proof of Corollary \ref{corollary complete noncompact Einstein}]
    Suppose $(M, \vp)$ is a closed $G_2$-structure such that the metric $g_{\vp}$ is complete and Einstein. We may assume $M$ is noncompact, since the compact case of Corollary \ref{corollary complete noncompact Einstein} has already been proven by Cleyton--Ivanov and Bryant~\cite{CleytonIvanovGeometryClosed, BryantRemarks}. 
    
    Since $M$ is an Einstein $7$-manifold, the scalar curvature $R$ is constant and $\Ric = \frac{R}{7}g_{\vp}$. By ~\eqref{equation R = -|T|^2}, $R = -|T|^2 \leq 0$. If $R<0$, then by applying Theorem \ref{theorem Ricci pinched Einstein} with $k_1 = k_2 = \frac{|R|}{7}$, we find a contradiction. Thus, $R=0$ on $M$, and~\eqref{equation R = -|T|^2} implies that $T=0$, i.e.\ $(M, \vp)$ is torsion-free.
\end{proof}

The proof of Theorem \ref{theorem Ricci pinched Einstein} also gives an estimate for how inextensible geodesics are in closed $G_2$-structures satisfying~\eqref{equation almost Einstein Ric bound} and ~\eqref{equation pinching of k1 and k2}. 

\begin{proof}[Proof of Theorem \ref{theorem incomplete Einstein}]
Let $(M, \vp)$ be a closed $G_2$-structure with a possibly incomplete metric $g_{\vp}$ satisfying~\eqref{equation almost Einstein Ric bound} and~\eqref{equation pinching of k1 and k2}. By scaling, we may assume without loss of generality that $k_2 = 1$. 

Let $p \in M$. Suppose that for some $R>0$, the exponential map $\exp_p$ is defined for all $v \in T_p M$ with $|v|_{g_{\vp}} \leq R$. The arguments of Lemma \ref{lemma volume growth Ricci pinched} hold in this setting for $r \leq R$, meaning that Lemma \ref{lemma volume growth Ricci pinched} is true for $\eps < r \leq R$ with balls $B(r)$ based at $p$. Using this fact, the argument of Theorem \ref{theorem incomplete Einstein} also goes through for $r \leq R$ up through~\eqref{equation Ricci pinched volume contradiction estimate}. In other words, setting $k_2 = 1$ in~\eqref{equation Ricci pinched volume contradiction estimate}, the following function is nondecreasing for $0<r\leq R$:
\begin{equation}\label{equation nondecreasing k2=1}
\frac{\sinh(6\sqrt{1/6}\,r) - 9\sinh(4\sqrt{1/6}\,r)+45\sinh(2\sqrt{1/6}\,r) - 60 \sqrt{1/6}\,r}{\exp\Big( \frac{6 \sqrt{3} k_1^2}{\sqrt{7}}r\Big)}.
\end{equation}
Given~\eqref{equation almost Einstein Ric bound} and~\eqref{equation pinching of k1 and k2}, it is easy to see that there is $C(k_1)>0$ such that the first derivative of the above function, ~\eqref{equation nondecreasing k2=1}, has a sign change at $r=C(k_1)$. If $R>C(k_1)$, the sign change at $r=C(k_1)$ contradicts the fact that~\eqref{equation nondecreasing k2=1} is nondecreasing for $r \leq R$. This implies that we may find $v \in T_p M$ with $C(k_1) < |v|_{g_{\vp}} < R$ such that $\exp_p (v)$ is not defined. The main statement of Theorem \ref{theorem incomplete Einstein} follows.

Now, if $g_{\vp}$ is an Einstein metric, then $k_1 = k_2 = 1$ in the scaling we have chosen. Setting $k_1 = 1$ in~\eqref{equation nondecreasing k2=1}, we find that this function has a sign change at $r=C \approx 2.05$. 
\end{proof}


\bibliographystyle{amsplain}
\bibliography{bibliography}

\end{document}